\pdfoutput=1
\RequirePackage{ifpdf}
\ifpdf 
\documentclass[pdftex]{sigma}
\else
\documentclass{sigma}
\fi

\numberwithin{equation}{section}

\newtheorem{Theorem}{Theorem}[section]

\newtheorem{Lemma}[Theorem]{Lemma}

\newtheorem{cex}[Theorem]{Counterexample}
{\theoremstyle{definition}
\newtheorem{Remark}[Theorem]{Remark} }

\def\a{\alpha}

\newcommand{\esf}{\Omega_{2q}}
\newcommand{\esfqq}{\Omega_{2q+2}}
\newcommand{\ccir}{\Omega_{2}}
\newcommand{\D}{{\cal D}}
\newcommand{\DD}{{\mathbb D}}
\newcommand{\Dz}{{\cal D}_{z}}
\newcommand{\Dcz}{{\cal D}_{\ovl{z}}}
\newcommand{\I}{{\cal I}}
\newcommand{\Ic}{{\overline{\cal I}}}
\def\la{\langle}
\def\ra{\rangle}
\def\ovl{\overline}

\begin{document}
\allowdisplaybreaks

\newcommand{\arXivNumber}{1704.01237}

\renewcommand{\PaperNumber}{088}

\FirstPageHeading

\ShortArticleName{Positive Def\/inite Functions on Complex Spheres and their Walks through Dimensions}

\ArticleName{Positive Def\/inite Functions on Complex Spheres\\ and their Walks through Dimensions}

\Author{Eugenio MASSA~$^\dag$, Ana Paula PERON~$^\dag$ and Emilio PORCU~$^{\ddag\S}$}

\AuthorNameForHeading{E.~Massa, A.P.~Peron and E.~Porcu}

\Address{$^\dag$~Departamento de Matem\'{a}tica, ICMC-USP - S\~{a}o Carlos,\\
\hphantom{$^\dag$}~Caixa Postal 668, 13560-970 S\~{a}o Carlos SP, Brazil}
\EmailD{\href{mailto:eug.massa@gmail.com}{eug.massa@gmail.com}, \href{mailto:apperon@icmc.usp.br}{apperon@icmc.usp.br}}

\Address{$^\ddag$~School of Mathematics and Statistics, Chair of Spatial Analytics Methods,\\
\hphantom{$^\ddag$}~University of Newcastle, UK}
\EmailD{\href{mailto:emilio.porcu@newcastle.ac.edu}{emilio.porcu@newcastle.ac.edu}}

\Address{$^\S$~Department of Mathematics, Universidad T\'ecnica Federico Santa Maria,\\
\hphantom{$^\S$}~Avenida Espa\~na 1680, Valpara\'{\i}so, 230123, Chile}

\ArticleDates{Received April 06, 2017, in f\/inal form October 30, 2017; Published online November 08, 2017}

\Abstract{We provide walks through dimensions for isotropic positive def\/inite functions def\/ined over complex spheres. We show that the analogues of Mont{\'e}e and Descente operators as proposed by Beatson and zu Castell [\textit{J.~Approx. Theory} \textbf{221} (2017), 22--37] on the basis of the original Matheron operator [Les variables r\'egionalis\'ees et leur estimation, Masson, Paris, 1965], allow for similar walks through dimensions. We show that the Mont{\'e}e operators also preserve, up to a constant, strict positive def\/initeness. For the Descente operators, we show that strict positive def\/initeness is preserved under some additional conditions, but we provide counterexamples showing that this is not true in general. We also provide a list of parametric families of (strictly) positive def\/inite functions over complex spheres, which are important for several applications.}

\Keywords{Descente; disk polynomials; Mont{\'e}e; positive def\/inite functions}

\Classification{42A82; 42C10; 42C05; 30E10; 62M30}

\section{Introduction and main results}\label{s-introd}

Positive def\/inite functions have a long history which can be traced back to papers by Ca\-ra\-th\'eo\-do\-ry, Herglotz, Bernstein and Matthias, culminating in Bochner's theorem from 1932--1933. See Berg \cite{berg-cap-book} for details. In the last twenty years several results related to this topic were obtained in f\/ields as diverse as mathematical analysis, numerical analysis, potential theory, probability theory and geostatistics: we refer the reader to the surveys in Schaback \cite{schaback-survey99,schaback-2000}, Berg \cite{berg-cap-book} and Fasshauer \cite{fassh-2011} for a complete list of references in this direction.

Positive def\/inite radial functions have been known since the two seminal papers by Schoen\-berg~\cite{scho38, scho-42}. The former is devoted to radially symmetric functions depending on the Euclidean distance, and the latter to isotropic functions on unit spheres ${\mathbb S}^d$ of $\mathbb{R}^{d+1}$. Literature on radially symmetric functions on Euclidean spaces has been especially fervent. In his essay devoted to the {\em clavier spherique}, Matheron \cite{matheron} proposed operators called {\em Mont{\'e}e} and {\em Descente} that preserve the property of positive def\/initeness but changing the dimension of the space initially considered. Such a property has been called {\em walk through dimensions}. It is worth noting that the walk through dimensions is achieved at the expense of modifying the dif\/ferentiability at the origin of a given candidate function. Wendland \cite{wendland1995} used the Mont{\'e}e operator with a class of compactly supported radial basis functions, termed Wendland's functions after his works. Schaback \cite{Schaback2011} covered the missing cases of walks through dimensions. Porcu et al.~\cite{zaz-bevi} used a~fractional version of the Mont{\'e}e operator to obtain generalized versions of Wendland's functions. For a reference on walks through dimensions in the geostatistical setting, the reader is referred to Gneiting \cite{gn} and to the more recent work of Porcu and Zastavnyi \cite{porcu-zas2014}.

Positive def\/inite functions as well as strictly positive def\/inite functions in several contexts have been deeply studied by the mathematical analysis literature, and the reader is referred to the works by Menegatto et al.\ (see Chen et al.~\cite{men-chen-sun}, Menegatto and Peron \cite{P-valdir-pd-esfcompl}, Guella et al.~\cite{P-jean-men-pdSMxSm}, and references therein). The use of positive def\/inite functions on real spheres for geostatisticians has arrived recently, thanks to the survey by Gneiting \cite{gneiting-2013} and the recent developments by Berg and Porcu \cite{porcu-berg} and Porcu et al. \cite{porcu-bev-gent}. In particular, Berg and Porcu \cite{porcu-berg} characterized the class of the positive def\/inite functions on the product of ${\mathbb S}^d$ with a locally compact group, extending the Schoenberg's class $\Psi_d$ of the positive def\/inite functions on ${\mathbb S}^d$ (Schoenberg \cite{scho-42}).

A continuous function $f\colon [-1,1]\to\mathbb R$ belongs to the class $\Psi_d$ when the kernel \begin{gather*}K\colon {\mathbb S}^d \times {\mathbb S}^d \to \mathbb{R} \colon K(\xi,\eta) = f(\la\xi,\eta\ra)\end{gather*} is positive def\/inite. Schoenberg \cite{scho-42} proved that $f\in\Psi_d$ if, and only if,
\begin{gather}\label{eq-exp-scho}
f(x)=\sum_{k\geq0}a_k^d c_k(d,x), \qquad \sum_{k\geq0}a_k^d<\infty, \qquad a_k^d\geq0, \quad \forall\, k\geq0,
\end{gather}
where $c_k(d,\cdot)$ are the normalized Gegenbauer polynomials associated to the index $d$ (see Szeg\H{o} \cite[p.~80]{szego}). The coef\/f\/icients in the above series are called {\em $d$-Schoenberg coefficients}. On the other hand, the subclass $\Psi_d^+$ of $\Psi_d$ of the strict positive def\/inite functions on ${\mathbb S}^d$, $d\geq2$, was characterized by Chen et al.~\cite{men-chen-sun}: $f\in\Psi_d^+$ if, and only if, the set $\{k\colon a_k^d>0\}$ contains inf\/initely many odd and inf\/initely many even integers.

The class $\Psi_d$ has received special interest in the last twenty years, while walks through dimensions for positive def\/inite functions on real spheres have been studied in the recent tour de force by Beatson and zu Castell \cite{beat-zucatell-2016, beat-zucatell}. In particular, Beatson and zu Castell \cite{beat-zucatell} def\/ine the {\em Mont{\'e}e} operator
\begin{gather*}
 (If)(x) = \int_{-1}^x f(u){\rm d}u, \qquad x\in[-1,1],
\end{gather*}
for $f$ integrable in $[-1,1]$, and the {\em Descente} operator
\begin{gather*}
(Df)(x) = \frac{{\rm d}}{{\rm d}x}f(x), \qquad x\in[-1,1],
\end{gather*} for $f$ absolutely continuous in $[-1,1]$.
They prove that, for $d\geq2$:
	\begin{enumerate}\itemsep=0pt
	\item[(i)] if $f\in\Psi_{d+2}$, then there exists a constant $c$ such that $c+I f\in \Psi_d$;
	\item[(ii)] if $f\in\Psi_{d+2}^+$, then there exists a constant $c$ such that $c+I f\in\Psi_d^+$;
	\item[(iii)] if $f\in\Psi_{d+2}$, $f\geq0$ and all $(d+2)$-Schoenberg coef\/f\/icients are positive, then $I f\in\Psi_d$ and all its $d$-Schoenberg coef\/f\/icients are positive;
	\item[(iv)] if $f\in\Psi_d$ and $Df$ is continuous, then $Df\in\Psi_{d+2}$;
	\item[(v)] if $f\in\Psi_d^+$ and $Df$ is continuous, then $Df\in\Psi_{d+2}^+$.	
	\end{enumerate}

Observe that the property of (strict) positive def\/initeness of $f$ is preserved by the operators {\em Mont{\'e}e} $I$ and {\em Descente} $D$.

In this paper, inspired by the work of Beatson and zu Castell \cite{beat-zucatell}, we study positive def\/inite functions on complex unit spheres $\Omega_{2q}$ of $\mathbb C^q$. In particular, we provide walks through dimensions over complex spheres.

Below, we state our main results and we refer to Section \ref{s-backg} for the necessary background.

 We denote the class of positive def\/inite functions on $\Omega_{2q}$ by $\Psi(\Omega_{2q})$. A characterization of such functions was proposed in Menegatto and Peron \cite{P-valdir-pd-esfcompl}: let $ \DD:=\{z\in\mathbb C \colon |z|\leq1\}\subset\mathbb{C}$, when a continuous function $f\colon \DD\to\mathbb{C}$ belongs to $\Psi(\esf)$, an expansion similar to \eqref{eq-exp-scho} exists, namely
 \begin{gather*}
 f(z) = \sum_{m,n\geq0}a_{m,n}^{q-2} R_{m,n}^{q-2}(z), \qquad z \in \DD,
 \end{gather*}(see equation \eqref{eq_defZer} and Theorem~\ref{t-pd-esf}). We will call the coef\/f\/icients $a_{m,n}^{q-2}$ as {\em $(2q)$-complex Schoenberg coefficients}.

In order to make the statements clear, it is convenient to introduce the Descente and Mont{\'e}e operators in the complex context.

Given $f\colon \DD\to\mathbb{C}$, we say that $f$ is dif\/ferentiable if, writing $z=x+iy\in\DD$, $f$ is dif\/ferentiable as a function of $x$ and $y$. Then, we denote by ${\cal D}_x f$ and ${\cal D}_y f$ the partial derivatives with respect to~$x$ and~$y$, respectively, and we def\/ine the Descente operators through the following {\it Wirtinger} derivatives:
\begin{gather}\label{eq-deriv_x_y}
\Dz f =\frac12( {\cal D}_x f - i{\cal D}_y f),
\qquad
\Dcz f = \frac12({\cal D}_x f + i{\cal D}_y f).
\end{gather}
We observe that $f$ might not be complex dif\/ferentiable, actually it is so only when $\Dcz f =0$, and in this case $ \Dz f =f'$, the complex derivative of $f$.

If $f$ admits a $z$-primitive $F$ and a $\ovl{z}$-primitive $G$ in $\DD$, that is, ${\cal D}_z F = {\cal D}_{\ovl{z}}G= f$, then we can def\/ine the Mont\'ee operators ${\cal I}$ and ${\cal \ovl{I}}$ by
\begin{gather*}
{\cal I}(f)(z) := F(z) - F(0) \qquad \text{and} \qquad {\cal \ovl{I}}(f)(z) := G(z) - G(0), \qquad z\in \DD.
\end{gather*}
By def\/inition,
\begin{gather} \label{eq-der-int}
{\cal D}_z ({\cal I} f) = f \qquad \text{and} \qquad {\cal D}_{\ovl{z}}(\ovl{{\cal I}}f) = f.
\end{gather}
Moreover,
\begin{gather*}
{\cal I}({\cal D}_z(f))(z) = f(z)-f(0) \qquad \text{and} \qquad \ovl{{\cal I}}({\cal D}_{\ovl{z}}(f))(z) = f(z)-f(0), \qquad z\in \DD.
\end{gather*}

Our main results are related with walks through dimensions for Descente and Mont{\'e}e ope\-ra\-tors over complex spheres:

\begin{Theorem} \label{t-descente-esf}
Let $f\colon\DD\to\mathbb{C}$ be continuously differentiable.
\begin{enumerate}\itemsep=0pt
\item[$(i)$] If $f$ belongs to the class $\Psi(\Omega_{2q})$, then $\Dz f$, $\Dcz f$ and $\D_xf$ belong to the class $\Psi(\Omega_{2q+2})$.
\item[$(ii)$] If $f$ belongs to the class $\Psi(\Omega_{2q})$ and has all positive $(2q)$-complex Schoenberg coefficients, then $\Dz f$, $\Dcz f$ and $\D_xf$ belong to the class $\Psi^+(\Omega_{2q+2})$.
\end{enumerate}
\end{Theorem}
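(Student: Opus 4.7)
The strategy is to use the Schoenberg-type expansion of $f$ in disk polynomials and apply the Wirtinger derivatives termwise via differentiation formulas that shift the parameter from $q-2$ to $q-1$; this transports expansions on $\esf$ into expansions on $\esfqq$. Once $\Dz f$ and $\Dcz f$ are written as series in the disk polynomials of parameter $q-1$ with nonnegative coefficients, membership in $\Psi(\esfqq)$ follows from Theorem~\ref{t-pd-esf}, while part $(ii)$ can be read off from which indices $(m,n)$ receive positive coefficients.

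In more detail, starting from $f(z)=\sum_{m,n\geq 0} a_{m,n}^{q-2} R_{m,n}^{q-2}(z)$ granted by Theorem~\ref{t-pd-esf}, the computational core is a pair of classical identities for disk polynomials of the shape
\begin{equation*}
\Dz R_{m,n}^{q-2}(z) = \kappa_{m,n}^{q-2}\, R_{m-1,n}^{q-1}(z), \qquad \Dcz R_{m,n}^{q-2}(z) = \widetilde\kappa_{m,n}^{q-2}\, R_{m,n-1}^{q-1}(z),
\end{equation*}
with strictly positive constants whenever the indicated index is nonnegative and the convention that terms with a negative index vanish. After termwise differentiation and reindexing $(m,n)\mapsto(m+1,n)$, one obtains
\begin{equation*}
\Dz f(z) = \sum_{m,n\geq 0} \kappa_{m+1,n}^{q-2}\, a_{m+1,n}^{q-2}\, R_{m,n}^{q-1}(z),
\end{equation*}
and an analogous expansion for $\Dcz f$; both have nonnegative coefficients, yielding $(i)$ for $\Dz f$ and $\Dcz f$. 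The case of $\D_xf$ then follows immediately from $\D_xf = \Dz f + \Dcz f$, which is just \eqref{eq-deriv_x_y}. For $(ii)$, positivity of every $a_{m,n}^{q-2}$ propagates to positivity of every $(2q+2)$-complex Schoenberg coefficient of $\Dz f$, $\Dcz f$, and hence of $\D_xf$, so strict positive definiteness on $\esfqq$ follows from the characterization of $\Psi^+(\esfqq)$ recalled in Section~\ref{s-backg}.

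The main obstacle is justifying the termwise action of $\Dz$ and $\Dcz$: the original series converges absolutely and uniformly because $|R_{m,n}^{q-2}|\leq 1$ and $\sum a_{m,n}^{q-2}<\infty$, but differentiating term by term demands analogous control on the differentiated series, which is not automatic from mere continuous differentiability of $f$. I would handle this by a regularization procedure, e.g., replacing $f$ by an Abel- or Ces\`aro-type smoothing which produces a genuinely smooth, uniformly convergent approximant, differentiating, and then passing to the limit using the closedness of $\Psi(\esfqq)$ under uniform limits. The secondary task is to verify the disk-polynomial differentiation identities above with the correct positive constants, which reduces to standard formulas in the theory of Jacobi and disk polynomials.
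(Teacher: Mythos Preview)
Your overall strategy---termwise Wirtinger differentiation of the disk-polynomial expansion using the parameter-shift identities \eqref{eq-derv-em-rel-z}, followed by reindexing---is exactly the one the paper uses, and your treatment of part~(ii) and of $\D_xf$ matches as well. The divergence is in how the termwise differentiation is justified.

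You propose an Abel/Ces\`aro regularization and a limit argument through closedness of $\Psi(\esfqq)$. That is plausible in outline, but you would still need to check that the derivatives of the regularized approximants converge uniformly to $\Dz f$ from mere $C^1$-regularity of~$f$, and to control the coefficient sums of the approximants uniformly. The paper bypasses this entirely. Its key device is Lemma~\ref{l-rel-coef-f-Dzf}: by integrating by parts in the integral formula \eqref{eq-viado} for the $(\alpha+1)$-coefficients of $\Dz f$ and then applying the recurrence \eqref{eq-rel-alpha+1_alpha} (Lemma~\ref{l-rel-alpha+1_alpha}), one gets the identity $b_{m,n}^{\alpha+1}=c_\alpha(m+1,n)\,a_{m+1,n}^{\alpha}$ \emph{directly}, with no appeal to termwise differentiation. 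Nonnegativity of the $b_{m,n}^{\alpha+1}$ is then immediate. Summability and uniform convergence follow from a separate lemma (Lemma~\ref{l-conv-ser-coeff}), which says that a continuous function on $\DD$ whose disk-polynomial coefficients are all nonnegative automatically has $\sum d_{m,n}^{q-2}<\infty$; this is proved via the Poisson--Szeg\H{o} kernel and the Dirichlet problem \eqref{eq-dirich-probl}. Only \emph{after} the differentiated series is known to converge uniformly does the paper invoke termwise differentiation, which is then the standard real-analysis statement.

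In short: your route can be made to work, but the paper's integration-by-parts identification of the coefficients (Lemma~\ref{l-rel-coef-f-Dzf}) together with the automatic-summability Lemma~\ref{l-conv-ser-coeff} is both cleaner and avoids any regularization machinery.
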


\begin{Theorem} \label{t-montee-esf}
Let $f\colon \DD\to\mathbb{C}$ be a continuous function admitting a $z$-primitive and a $\ovl{z}$-primitive in $\DD $.
\begin{enumerate}\itemsep=0pt
\item[$(i)$] If $f$ belongs to the class $\Psi(\Omega_{2q+2})$, then there exist real constants $c$ and $C$ such that $c+\I f$ and $C+\Ic f$ belong to the class $\Psi(\Omega_{2q})$.
\item[$(ii)$] If $f$ belongs to the class $\Psi^+(\Omega_{2q+2})$, then there exist real constants $c$ and $C$ such that $c+\I f$ and $C+\Ic f$ belong to the class $\Psi^+(\Omega_{2q})$.
\end{enumerate}
\end{Theorem}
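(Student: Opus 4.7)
My plan is to mirror the real-case strategy of Beatson and zu Castell, working with disk polynomial expansions in place of Gegenbauer expansions. By Theorem~\ref{t-pd-esf}, $f\in\Psi(\esfqq)$ iff $f(z)=\sum_{m,n\geq0}a_{m,n}^{q-1}R_{m,n}^{q-1}(z)$ with $a_{m,n}^{q-1}\geq0$ and $\sum a_{m,n}^{q-1}<\infty$. The goal is then to exhibit an expansion of $c+\I f$ in the basis $\{R_{m,n}^{q-2}\}$ with non-negative, summable coefficients, which by the same characterization at the lower dimension places it in $\Psi(\esf)$. The $\Ic f$ case is entirely symmetric, obtained by interchanging the roles of $z$ and $\bar z$ and using $\D_{\bar z}$.

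\textbf{Construction and proof of (i).} The key input is the derivative identity for disk polynomials,
\[
\Dz R_{m,n}^{\alpha}(z)=\kappa_{m,n}^{\alpha}R_{m-1,n}^{\alpha+1}(z),\qquad m\geq1,
\]
with $\kappa_{m,n}^{\alpha}>0$ (to be recalled from the background, as the complex analogue of the Gegenbauer derivative formula), together with $R_{m,n}^{\alpha}(0)=0$ unless $m=n=0$. Read in reverse, a normalized $R_{m+1,n}^{q-2}$ is a $z$-primitive of $R_{m,n}^{q-1}$ that vanishes at the origin. This motivates defining
\[
\widetilde F(z):=\sum_{m,n\geq0}\frac{a_{m,n}^{q-1}}{\kappa_{m+1,n}^{q-2}}R_{m+1,n}^{q-2}(z).
\]
I would then verify uniform convergence on $\DD$ (using $|R_{m,n}^{\alpha}|\leq1$ and a lower bound $\kappa_{m+1,n}^{q-2}\geq c_0>0$), and justify term-by-term differentiation to conclude $\Dz\widetilde F=f$. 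Since any two $z$-primitives of $f$ differ by a function of $\bar z$ only, I may take $F=\widetilde F$, so $\I f=\widetilde F$ with $\widetilde F(0)=0$. Then
\[
c+\I f = c\,R_{0,0}^{q-2}+\sum_{m\geq1,\,n\geq0}\frac{a_{m-1,n}^{q-1}}{\kappa_{m,n}^{q-2}}R_{m,n}^{q-2}
\]
has all disk-polynomial coefficients non-negative for any $c\geq 0$, and they are summable. By Theorem~\ref{t-pd-esf}, $c+\I f\in\Psi(\esf)$, proving~(i).

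\textbf{Strict positive definiteness and the main obstacle.} For~(ii) I would invoke the characterization of $\Psi^+(\esf)$ (stated in the background): a combinatorial condition on the support set $\{(m,n):\text{coefficient}>0\}$. Writing $A_f:=\{(m,n):a_{m,n}^{q-1}>0\}$, the support of $c+\I f$ in the $R^{q-2}$-expansion is $\{(m+1,n):(m,n)\in A_f\}$, enlarged by $(0,0)$ when $c>0$. The task is then to check that the index shift $(m,n)\mapsto(m+1,n)$, combined with adjunction of the origin, transports the $\Psi^+(\esfqq)$ support condition to the $\Psi^+(\esf)$ one for a judiciously chosen $c>0$. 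I anticipate two sources of technical friction: (a) pinning down the growth of $\kappa_{m+1,n}^{q-2}$ precisely enough to secure uniform convergence of $\widetilde F$ and term-by-term differentiation, which I expect to be routine once the explicit form of the derivative identity is in hand; and (b) verifying the support/richness condition across the index shift, which should follow directly from the combinatorial form of the strict-PD characterization for complex spheres, since enlarging a set only makes it richer.
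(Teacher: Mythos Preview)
Your overall architecture is the paper's: build a $z$-primitive term by term via the derivative identity $\Dz R_{m,n}^{\alpha}=c_{\alpha}(m,n)R_{m-1,n}^{\alpha+1}$, check summability using $c_{\alpha}(m+1,n)\geq1$, and read off a nonnegative $R^{q-2}$-expansion. But there is a factual error that produces a genuine gap. You assert that $R_{m,n}^{\alpha}(0)=0$ unless $m=n=0$; in fact $R_{m,n}^{\alpha}(0)=0$ exactly when $m\neq n$, while on the full diagonal
\[
R_{n,n}^{\alpha}(0)=\frac{(-1)^n n!\,\alpha!}{(n+\alpha)!}\neq0\qquad\text{for every }n\geq0.
\]
In your series $\widetilde F(z)=\sum_{m,n\geq0}\dfrac{a_{m,n}^{q-1}}{\kappa_{m+1,n}^{q-2}}R_{m+1,n}^{q-2}(z)$ the diagonal terms occur whenever $n=m+1$, so
\[
\widetilde F(0)=\sum_{m\geq0}\frac{a_{m,m+1}^{q-1}}{\kappa_{m+1,m+1}^{q-2}}\,R_{m+1,m+1}^{q-2}(0)
\]
is a real but in general nonzero number. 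Hence $\I f=\widetilde F-\widetilde F(0)$, not $\widetilde F$, and the $(0,0)$ coefficient of $c+\I f$ equals $c-\widetilde F(0)$. Your conclusion that ``any $c\geq0$'' works is therefore false; one must take $c\geq\widetilde F(0)$ (the paper takes $c=\widetilde F(0)$). This is not a cosmetic slip: it is precisely the reason the constant appears in the statement at all, and your argument as written would prove the stronger (and false) claim $\I f\in\Psi(\esf)$.

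Once this is corrected, the rest of your outline matches the paper and goes through: summability and uniform convergence follow from $\kappa_{m+1,n}^{q-2}=c_{q-2}(m+1,n)\geq1$ and $|R_{m,n}^{q-2}|\leq1$; term-by-term differentiation gives $\Dz\widetilde F=f$; and for~(ii), the shift $(m,n)\mapsto(m+1,n)$ translates the set of differences $\{m-n:a_{m,n}^{q-1}>0\}$ by $+1$, which plainly preserves the property of meeting every arithmetic progression $N\mathbb Z+j$, so condition~\eqref{eq-condit-para-spd} transfers.
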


Observe that in Theorem \ref{t-descente-esf}(ii) we assumed the additional condition that all $(2q)$-complex Schoenberg coef\/f\/icients are positive. This condition can be weakened (see Remark~\ref{r-hipotese-relax} below), but not completely removed.

In fact, the following counterexamples show that the Descente operators over complex spheres do not preserve, in general, strict positive def\/initeness, in contrast to the real case of Beatson and zu Castell.
\begin{cex}\label{cex} Let $q\ge 2$ be an integer.
	\begin{enumerate}\itemsep=0pt
		\item[$(i)$]If $f(z) = \sum\limits_{m=0}^\infty a_{m,0}^{q-2}R_{m,0}^{q-2}(z)$, where $\sum\limits_{m=0}^\infty a_{m,0}^{q-2} <\infty$ and $a_{m,0}^{q-2}>0$ for all $m$, then $f\in\Psi^+(\esf)$ and $\Dz f, {\cal D}_xf\in\Psi^+(\esfqq)$ but $\Dcz f \not\in\Psi^+(\esfqq)$.
		\item[$(ii)$] If $f(z) = \sum\limits_{n=0}^\infty a_{0,n}^{q-2}R_{0,n}^{q-2}(z)$, where $\sum\limits_{n=0}^\infty a_{0,n}^{q-2} <\infty$ and $a_{0,n}^{q-2}>0$ for all $n$, then $f\in\Psi^+(\esf)$ and $\Dcz f, {\cal D}_xf\in\Psi^+(\esfqq)$ but $\Dz f\not\in\Psi^+(\esfqq)$.
		\item[$(iii)$] If $f(z) = \sum\limits_{n=0}^\infty a_{0,n}^{q-2}R_{0,n}^{q-2}(z)+ \sum\limits_{m=0}^\infty a_{m,0}^{q-2}R_{m,0}^{q-2}(z)$, where $a_{0,n}^{q-2},a_{m,0}^{q-2}\geq0$ for all $m$, $n$, and
\begin{gather*}
a_{0,n}^{q-2}>0 \Longleftrightarrow n\in 5\mathbb{Z}_+ +4,\\
a_{m,0}^{q-2}>0 \Longleftrightarrow m\in (5\mathbb{Z}_+\setminus\{0\})\cup(5\mathbb{Z}_+ + 2)\cup(5\mathbb{Z}_+ + 3)\cup(5\mathbb{Z}_+ + 4),
\end{gather*}
		then $f\in\Psi^+(\esf)$ but $\Dz f, \Dcz f,\mathcal D_xf \notin\Psi^+(\esfqq)$.
	\end{enumerate}
\end{cex}
	
\begin{Remark}\label{r-hipotese-relax} In the real case, the condition that all $d$-Schoenberg coef\/f\/icients are positive is satisf\/ied by most of the functions in the class $\Psi_d^+$ which appear in applications such as in statistics and geostatistics.
	
In the complex case, among the examples that we provide in Section~\ref{s-familias}, only the exponential function satisf\/ies this condition.
On the other hand, the Akta\c{s}--Ta\c{s}delen--Yavuz, Horn and Lauricella families, satisfy the following simple weaker condition, which is also suf\/f\/icient to obtain the conclusion of Theorem~\ref{t-descente-esf}(ii):
\begin{itemize}\itemsep=0pt
\item {\em if $a_{m,n}^{q-2}$ are the $(2q)$-complex Schoenberg coefficients of $f$, then for some $c,d\in\mathbb N$, the set
\begin{gather*}
\big\{m-n\colon a_{m,n}^{q-2}>0,\, m,n\geq c\big\}
\end{gather*}
contains $(d+\mathbb Z_+)$ or $(-d-\mathbb Z_+)$.}
\end{itemize}

In fact, the weakest possible condition to be used in Theorem \ref{t-descente-esf}(ii) follows from Guella and Menegatto~\cite{jean-men-spd-esf-compl} and reads as follows:
\begin{gather}\label{eq-condit-para-desc}
\big\{m-n\colon a_{m,n}^{q-2}>0,\, m,n\geq1\big\}\cap (N\mathbb Z+j) \neq \varnothing,
\end{gather}
for every $N\geq1$, $j=0,1,\ldots,N-1$. We will prove Theorem~\ref{t-descente-esf} with this last condition, since the previous ones are stronger.
\end{Remark}

This paper is organized as follows: in Section~\ref{s-backg}, we provide the necessary background about positive def\/inite functions on complex spheres and we give a list of parametric families of these functions, which are of interest for both numerical analysis and geostatistical communities. Finally, in Section~\ref{s-proofs}, we obtain all necessary technical lemmas, we give the proofs of Theorems~\ref{t-descente-esf} and~\ref{t-montee-esf}, and we show the Counterexample~\ref{cex}.

\section[The classes $\Psi(\Omega_{2q})$ and $\Psi^+(\Omega_{2q})$: a brief survey]{The classes $\boldsymbol{\Psi(\Omega_{2q})}$ and $\boldsymbol{\Psi^+(\Omega_{2q})}$: a brief survey}\label{s-backg}

This section is largely expository and presents some basic facts and background needed for a~self contained exposition.

For $q$ being a positive integer, we denote by $\esf$ the unit sphere of $\mathbb{C}^q$ and by $B_{2q} := \{z\in \mathbb{C}^q\colon |z|\leq1\}$ the closed disk in $\mathbb{C}^q$. Also, we def\/ine the Pochhammer symbol $ (a)_n := a(a+1)\cdots(a+n-1)$, with $(a)_0:= 1$.

Let $A$ be a nonempty set. A continuous kernel $K\colon A^{2} \rightarrow \mathbb{C}$ is {\em positive definite} if and only if
\begin{gather} \label{posdef}
 \sum_{\mu,\nu=1}^{l}c_{\mu}\ovl{c_{\nu}}K(\xi_{\mu},\xi_{\nu}) \geq 0,
\end{gather}
for all $l \in \mathbb{Z}_+:=\{0,1,2,\ldots\}$, $\{\xi_1, \xi_2,\ldots, \xi_l\} \subset A$ and $\{c_1, c_2, \ldots, c_l\} \subset \mathbb{C}$. If the inequality in~(\ref{posdef}) is strict when at least one $c_\mu$ is nonzero, then $K$ is called {\em strictly positive definite}. For $q$ a strictly positive integer, we def\/ine $A_{q}:= \Omega_2$ when $q=1$ and $A_{q}:= \DD $ for $q>1$. Throughout we shall work with the class $\Psi(\Omega_{2q})$ of continuous functions $f\colon A_{q} \to \mathbb{C}$ such that the kernel $K\colon \Omega_{2q} \times \Omega_{2q} \to \mathbb{C}$ def\/ined as
\begin{gather} \label{isotropy}
 K(\xi,\eta) = f(\la\xi,\eta\ra), \qquad (\xi,\eta) \in \Omega_{2q} \times \Omega_{2q}, \end{gather}
where the symbol $\la\cdot,\cdot\ra$ denotes the usual inner product in $\mathbb C^q$, is positive def\/inite.

Observe that an immediate consequence of the def\/inition is that $f$ satisf\/ies $f(\ovl z)=\ovl{f(z)}$. We shall use the notation~$\Psi^{+}(\Omega_{2q})$ if the kernel $K$ associated to $f$ through (\ref{isotropy}) is strictly positive def\/inite. Positive def\/inite kernels satisfying the identity above are called isotropic. The class~$\Psi(\Omega_{2q})$ is parenthetical to the class $\Psi_d$ introduced by Schoenberg~\cite{scho-42}, and we refer the reader to the recent review in Gneiting \cite{gneiting-2013} for a thorough description of the properties of this class. Further, the class $\Psi_d$ represents the building block for extension to product spaces, and the reader is referred to Berg and Porcu~\cite{porcu-berg} as well as to Guella et al.~\cite{P-jean-men-pdSMxSm} for recent ef\/forts in this direction. The classes $\Psi(\Omega_{2q})$ are nested, with the following inclusion relation being strict:
\begin{gather*}
\Psi(\Omega_4) \supset \Psi(\Omega_6) \supset \cdots \supset \Psi(\Omega_{\infty}),
\end{gather*}
where $\Omega_\infty$ is the unit sphere in the Hilbert space $\ell_2(\mathbb C)$. Analogous relations apply to $\Psi^+(\Omega_{2q})$.

Observe that the class $\Psi(\Omega_2)$ is a dif\/ferent class and it can not be added to the inclusions above (see Menegatto and Peron \cite{P-valdir-pd-esfcompl}). For this reason, in this work we always consider $q\geq2$. Actually the main purpose here is to study the walks through dimensions considering functions in the classes $\Psi(\Omega_{2q})$.

Characterization theorems for the classes $\Psi(\Omega_{2q})$ are available in recent literature, and some ingredients are needed for a detailed exposition. We refer to Boyd and Raychowdhury \cite{boyd}, Dreseler and Hrach \cite{dreseler}, and Koornwinder \cite{koor-II, koor-III} for more information concerning this necessary material.

The {\em disc polynomial} $R_{m,n}^{\alpha}$ of degree $m+n$ in $x$ and $y$ associated to a real number $\alpha>-1$ was introduced by Zernike~\cite{zernike} and Zernike and Brinkman~\cite{zernike-brinkman}, see also Koornwinder~\cite{koor-II}, as the polynomial given by
\begin{gather}\label{eq_defZer}
 R_{m,n}^{\alpha}(z):=r^{|m-n|}e^{i(m-n)\theta}R_{\min\{m,n\}}^{(\alpha,|m-n|)}\big(2r^2 -1\big), \qquad z=re^{i\theta}=x+iy\in \DD,
\end{gather}
where $R_{k}^{(\alpha,\beta)}$ is the usual Jacobi polynomial of degree $k$ associated to the numbers $\alpha,\beta>-1$ and normalized by~$R_{k}^{(\alpha,\beta)}(1)=1$ (see Szeg\H{o} \cite[p.~58]{szego}). Note that the function $R_{m,n}^{\alpha}$ is a~polynomial of degrees~$m$ and~$n$ with respect to the arguments $z$ and $\ovl{z}$, respectively. Moreover it satisf\/ies $R_{m,n}^{\alpha}(\overline z)=\overline {R_{m,n}^{\alpha}(z)}$.

Let ${\rm d}\nu_{\alpha}$ be the positive measure having total mass identically equal to one on $\DD $, and given by
\begin{gather}\label{eq-med-nu}
{\rm d}\nu_{\alpha}(z)=\frac{\alpha+1}{\pi}\big(1-x^2-y^2\big)^{\alpha}{\rm d}x{\rm d}y, \qquad z=x+iy.
\end{gather}
Due to the orthogonality relations for Jacobi polynomials, the set $\{R_{m,n}^{\alpha}\colon 0 \leq m,n<\infty\}$ forms a complete orthogonal system in $L^{2}(\DD,{\rm d}\nu_{\alpha})$
with
\begin{gather}\label{eq-ortog_disc_pol}
\int_{\DD } R_{m,n}^{\alpha}(z)\ovl{R_{k,l}^{\alpha}(z)} {\rm d}\nu_{\alpha}(z) = \frac{1}{h_{m,n}^\alpha}\delta_{m,k}\delta_{n,l},
\end{gather}
where
\begin{gather}\label{eq-const_hmn}
 h_{m,n}^\alpha = \frac{m+n+\a+1}{\a+1}\left(
 \begin{matrix}
 \a +m \\
 \a \end{matrix} \right) \left( \begin{matrix}
 \a+n \\ \a \end{matrix} \right),
\end{gather} and $\delta_{n,l}$ denotes the Kronecker delta. Thus, a function $f\in L^1(\DD,\nu_\a)$, $\a\geq0$, has an expansion in terms of disc polynomials $R_{m,n}^\a$ def\/ined through
\begin{gather}\label{eq-exp-dix-pol}
f(z) \sim \sum_{m,n\geq0}a_{m,n}^\a R_{m,n}^\a(z),
\end{gather}
where
\begin{gather} \label{eq-viado}
a_{m,n}^\a = h_{m,n}^\a \int_{\DD }f(z)\ovl{R_{m,n}^\a(z)}{\rm d}\nu_a(z).
\end{gather}

The Poisson--Szeg\H{o} kernel will be a fundamental tool for the proof of Theorem~\ref{t-pd-esf}(1) below: the characterization of the class~$\Psi(\esf)$. We give here a brief presentation of it, since this kernel will also be used ahead. The Poisson--Szeg\H{o} kernel is def\/ined by
\begin{gather}\label{eq-def-poissson-szego}
{\cal P}_q(r\xi,\eta) := \frac{1}{\sigma_{2q}} \frac{(1-|r\xi|^2)^q}{|1-\la r\xi,\eta\ra|^{2q}}, \qquad r\in[0,1), \quad \xi,\eta\in\esf,
\end{gather}
where $\sigma_{2q}$ is the total surface of $\esf$. Folland \cite{folland} proved that it has an expansion in terms of disc polynomials as
\begin{gather}\label{eq-pois-sze-folland}
{\cal P}_q(r\xi,\eta) =	\sum_{m,n\geq0}\frac{h_{m,n}^{q-2}}{\sigma_{2q} } S_{m,n}^q(r)R_{m,n}^{q-2}(\langle\xi,\eta\rangle), \qquad \xi,\eta\in\esf,\quad r\in[0,1),
\end{gather}
where $S_{m,n}^q(r)\geq 0$, $\lim\limits_{r\to1^-}S_{m,n}^q(r)=1$ and the series converges absolutely and uniformly for $\xi,\eta\in\esf$ and $0 \leq r \leq R$, for each $R <1$.

The Poisson--Szeg\H{o} kernel also appears in the solution of the following Dirichlet problem for the Laplace--Beltrami operator $\Delta_{2q}$ (see Stein~\cite{stein}): given a continuous function $h\colon \esf \rightarrow \mathbb{C}$, there exists a continuous function $u \colon B_{2q} \rightarrow \mathbb{C} $ such that $\Delta_{2q}u=0$ and $u|_{\esf} = h$. The solution $u$ can be computed through
\begin{gather}\label{eq-dirich-probl}
u(z)=\int_{\esf}{\cal P}_q(z,\rho)h(\rho){\rm d}\omega_{2q}(\rho), \qquad z \in B_{2q},
\end{gather}
where ${\rm d}\omega_{2q}$ denotes the rotation-invariant surface element on $\esf$.

In fact, using this, if $\alpha=q-2\geq0$ is an integer and $f$ is a continuous function on~$\DD $, the coef\/f\/icients in the series in~\eqref{eq-exp-dix-pol}, can be written as (see Menegatto and Peron~\cite{P-valdir-pd-esfcompl}):{\samepage
\begin{gather}\label{eq-coef-exp-disc-f-cont}
a_{m,n}^{q-2} = \frac{h_{m,n}^{q-2}}{\sigma_{2q}}\int_{\esf} f(\la\rho,e_1\ra) R_{m,n}^{q-2}(\la e_1,\rho\ra) {\rm d}\omega_{2q}(\rho),
\end{gather}
where $e_1=(1,0,\ldots,0)\in\esf$.}

We give now the representations for the elements of the classes $\Psi(\Omega_{2q})$ and $\Psi^+(\Omega_{2q})$ that were proved by Menegatto and Peron \cite{P-valdir-complexapproach, P-valdir-pd-esfcompl} and Guella and Menegatto~\cite{jean-men-spd-esf-compl}:

\begin{Theorem} \label{t-pd-esf} Let $f\colon \DD \to\mathbb{C}$ be a continuous function. The following assertions are true:
\begin{enumerate}\itemsep=0pt
\item[$(1)$] $f\in\Psi(\esf)$ if, and only if,
\begin{gather} \label{eq-repres-pd-esf}
f(z) = \sum_{m,n\geq0}a_{m,n}^{q-2} R_{m,n}^{q-2}(z), \qquad z \in \DD,
\end{gather}
where $\sum\limits_{m,n\geq0}a_{m,n}^{q-2} <\infty$ and $a_{m,n}^{q-2}\geq0$ for all $(m,n)$;
\item[$(2)$] $f\in \Psi^+(\esf)$ if, and only if, $f\in\Psi(\esf)$ and
\begin{gather}\label{eq-condit-para-spd}
\big\{m-n\colon a_{m,n}^{q-2}>0,\, m,n\geq0\big\}\cap (N\mathbb Z+j) \neq \varnothing,
\end{gather}
for every $N\geq1$, $j=0,1,\ldots,N-1$.
\end{enumerate}
\end{Theorem}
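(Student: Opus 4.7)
The plan is to prove the two parts separately, relying on the fact that the disc polynomials arise as the reproducing kernels of the $U(q)$-irreducible summands in the decomposition of $L^2(\esf)$. Let $\{Y_{m,n}^{k}\colon k=1,\dots,h_{m,n}^{q-2}\}$ be an orthonormal basis (in $L^2(\esf,d\omega_{2q})$) of the space $\mathcal H_{m,n}$ of harmonic polynomials of bi-degree $(m,n)$ restricted to $\esf$. The key ingredient throughout is the addition formula (Koornwinder~\cite{koor-II}):
\begin{gather*}
R_{m,n}^{q-2}(\la\xi,\eta\ra) = \frac{\sigma_{2q}}{h_{m,n}^{q-2}}\sum_{k=1}^{h_{m,n}^{q-2}}Y_{m,n}^{k}(\xi)\,\ovl{Y_{m,n}^{k}(\eta)},\qquad \xi,\eta\in\esf.
\end{gather*}

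For the sufficiency direction in (1), the addition formula exhibits each kernel $(\xi,\eta)\mapsto R_{m,n}^{q-2}(\la\xi,\eta\ra)$ as a manifestly positive definite kernel on $\esf$. Nonnegative linear combinations preserve positive definiteness, and since $R_{m,n}^{q-2}(1)=1$, convergence of $\sum a_{m,n}^{q-2}$ forces absolute and uniform convergence of the series on $\DD$, so the limit is continuous. For the necessity direction, I would use the coefficient formula \eqref{eq-coef-exp-disc-f-cont} together with the Poisson--Szeg\H o expansion \eqref{eq-pois-sze-folland}: apply \eqref{eq-dirich-probl} to the continuous boundary function $\rho\mapsto f(\la\rho,e_1\ra)$, substitute \eqref{eq-pois-sze-folland}, and identify coefficients via the orthogonality \eqref{eq-ortog_disc_pol} to show that the series \eqref{eq-repres-pd-esf} is Abel summable to $f$. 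To obtain $a_{m,n}^{q-2}\geq 0$, rewrite \eqref{eq-coef-exp-disc-f-cont} via the addition formula and use that the positive definiteness of $K(\xi,\eta)=f(\la\xi,\eta\ra)$ passes to integrals against any test functions; choosing the test measure $Y_{m,n}^k(\xi)\,d\omega_{2q}(\xi)$ yields
\begin{gather*}
0\leq \int_{\esf}\int_{\esf}f(\la\xi,\eta\ra)\,Y_{m,n}^k(\eta)\,\ovl{Y_{m,n}^k(\xi)}\,d\omega_{2q}(\xi)\,d\omega_{2q}(\eta) = \frac{\sigma_{2q}^2}{(h_{m,n}^{q-2})^2}\,a_{m,n}^{q-2},
\end{gather*}
by orthogonality of distinct $\mathcal H_{m,n}$. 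Finally $\sum a_{m,n}^{q-2}<\infty$ comes from evaluating the Abel-summable expansion at $z=1$, where $R_{m,n}^{q-2}(1)=1$.

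For part (2), sufficiency: given distinct $\xi_1,\dots,\xi_l\in\esf$ and coefficients $c_1,\dots,c_l$ not all zero, substituting the addition formula into the expansion of $f$ gives
\begin{gather*}
\sum_{\mu,\nu=1}^{l}c_\mu\ovl{c_\nu}\,f(\la\xi_\mu,\xi_\nu\ra) = \sum_{m,n\geq 0}\frac{\sigma_{2q}\,a_{m,n}^{q-2}}{h_{m,n}^{q-2}}\sum_{k=1}^{h_{m,n}^{q-2}}\Bigl|\sum_{\mu=1}^{l}c_\mu\,Y_{m,n}^{k}(\xi_\mu)\Bigr|^{2}.
\end{gather*}
If this quantity vanishes, then $\sum_\mu c_\mu Y_{m,n}^{k}(\xi_\mu)=0$ for every $(m,n)$ with $a_{m,n}^{q-2}>0$ and every $k$. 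Since $\mathcal H_{m,n}$ is the $(m,n)$-isotypic component of the $U(q)$-action and $Y_{m,0}$, $Y_{0,n}$ are the holomorphic/antiholomorphic pieces, one shows that the algebra generated by $\bigoplus_{(m,n)\in S}\mathcal H_{m,n}$ separates points of $\esf$ exactly when the set $\{m-n\colon (m,n)\in S\}$ meets every residue class modulo $N$ for every $N\geq 1$; this is the content of~\eqref{eq-condit-para-spd} and delivers a contradiction with $c\neq 0$. Necessity: if \eqref{eq-condit-para-spd} fails for some $N,j$, place the points on a circle $\xi_\mu = e^{2\pi i\mu/N}e_1$ and use characters of $\mathbb Z/N\mathbb Z$ to build a nontrivial vanishing combination.

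The main obstacle is the sufficiency half of (2): the passage from the vanishing of the Fourier-type coefficients $\sum_\mu c_\mu Y_{m,n}^k(\xi_\mu)$ on the sparse index set $S=\{(m,n)\colon a_{m,n}^{q-2}>0\}$ to the conclusion $c=0$ relies on a delicate Stone--Weierstrass/character argument on $\esf$ that needs the full force of \eqref{eq-condit-para-spd}; I would follow Guella--Menegatto~\cite{jean-men-spd-esf-compl} in exploiting that the polynomial ring generated by $z_1,\dots,z_q,\ovl z_1,\dots,\ovl z_q$ under the $U(q)$-action has its graded pieces labelled by $(m,n)$ and that missing residue classes of $m-n$ correspond precisely to sub-bi-gradings where $U(1)$-invariant functions are confined.
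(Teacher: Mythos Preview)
The paper does not prove Theorem~\ref{t-pd-esf}; it quotes the result from Menegatto--Peron~\cite{P-valdir-complexapproach,P-valdir-pd-esfcompl} for part~(1) and Guella--Menegatto~\cite{jean-men-spd-esf-compl} for part~(2). The only hint the paper gives about the method is the remark that the Poisson--Szeg\H{o} kernel is ``a fundamental tool for the proof of Theorem~\ref{t-pd-esf}(1)'', together with Lemma~\ref{l-conv-ser-coeff}, whose proof is explicitly modelled on Menegatto--Peron's argument and carries out the Abel-summability step at $z=1$ that you describe. Your outline for part~(1) therefore matches the cited approach: addition formula for sufficiency, coefficient formula~\eqref{eq-coef-exp-disc-f-cont} plus the Poisson--Szeg\H{o} expansion~\eqref{eq-pois-sze-folland} and the Dirichlet solution~\eqref{eq-dirich-probl} for necessity, with nonnegativity coming from testing the positive definite kernel against spherical harmonics.

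For part~(2) you end up in the same place as the paper: both defer to~\cite{jean-men-spd-esf-compl}. Your necessity sketch (circulant matrices built from $\xi_\mu=e^{2\pi i\mu/N}e_1$, whose eigenvalues are $N\sum_{m-n\equiv j\;(\mathrm{mod}\,N)}a_{m,n}^{q-2}$) is correct. The one place I would push back is the sufficiency heuristic: framing it as ``the algebra generated by $\bigoplus_{(m,n)\in S}\mathcal H_{m,n}$ separates points exactly when \eqref{eq-condit-para-spd} holds'' is not how the cited proof proceeds, and as stated it is not quite right (for instance, any $\mathcal H_{m,n}$ with $m\neq n$ already distinguishes $\xi$ from $e^{i\theta}\xi$ for generic~$\theta$, yet a single such summand does not force strict positive definiteness). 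The actual argument in~\cite{jean-men-spd-esf-compl} works with the linearization coefficients for products $R_{m,n}^{q-2}R_{k,l}^{q-2}$ and an induction on the support of the Schoenberg sequence, exploiting that the difference $m-n$ is additive under products; it is combinatorial rather than a direct Stone--Weierstrass application. Since you explicitly say you would follow that reference, this is a clarification rather than a gap.
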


Note that the index $\a=q-2$ of the disc polynomials is related to the sphere $\esf$ and consequently $\a+1=q-1$ is related to $\Omega_{2q+2}$.

The coef\/f\/icients $a_{m,n}^{q-2}$ are the analogue of the $d$-Schoenberg coef\/f\/icients~$a_k^d$ as in Daley and Porcu~\cite{daley-porcu} and Ziegel \cite{ziegel}, referring to the expansion of the members of the Schoenberg class~$\Psi_d$. In analogy, we will call $a_{m,n}^{q-2}$ as {\em $(2q)$-complex Schoenberg coefficients}.

\subsection[Families within the classes $\Psi(\esf)$ and $\Psi^+(\esf)$]{Families within the classes $\boldsymbol{\Psi(\esf)}$ and $\boldsymbol{\Psi^+(\esf)}$} \label{s-familias}

It is well known that there exist many examples of functions in the class $\Psi_d$, some of them widely used in applications (see for example Gneiting \cite{gneiting-2013} and Porcu et al.~\cite{porcu-bev-gent}).

In the literature it is also possible to f\/ind examples of functions that satisfy the conditions in Theorem \ref{t-pd-esf}, or those in Remark \ref{r-hipotese-relax}, and therefore they belong to the classes~$\Psi(\Omega_{2q})$ and~$\Psi^+(\Omega_{2q})$. Some of them, as well as their use in applications, appeared recently, probably originated by the work of W\"unsche~\cite{wunsche}, that deals with disc polynomials: a fundamental tool for studying the functions in these classes. We give below a collection of such functions.

{\bf 1. Disk Polynomials and related families.} The product kernel (Boyd and Raychowdhury~\cite{boyd}),
\begin{gather*}
f_{m,n}(z) = z^m\ovl{z}^n = \sum_{j=0}^{\min\{m,n\}} c_{q,m,n}^j
R_{m-j,n-j}^{q-2}(z), \qquad c_{q,m,n}^j\geq0, \quad z\in \DD,
\end{gather*}
is an element of the class $\Psi(\esf)$, for each $m,n\geq0$.

{\bf 2. Poisson--Szeg\H{o} kernel and related families.} An application of \eqref{eq-def-poissson-szego} and \eqref{eq-pois-sze-folland} shows that
\begin{gather*}
f_r(z) := \frac1{\sigma_{2q}}
\frac{(1-r^2)^q}{|1-rz|^{2q}} = \sum_{m,n\geq0}\frac{h_{m,n}^{q-2}}{\sigma_{2q}}
S_{m,n}^q(r)R_{m,n}^{q-2}(z),\quad \ z\in
\DD,
\end{gather*}
and hence it is a member of the class $\Psi(\esf)$, for each $r\in[0,1)$.

{\bf 3. Exponential function.} The function (Menegatto et al.~\cite{P-valdir-claude-construction})
\begin{gather*}
e^{z+\ovl{z}}=\sum_{m+n=0}^\infty
\frac{(m+1)_{q-2}(n+1)_{q-2}}{(q-2)!}\left(\sum_{j=0}^{\infty}\frac{1}{j!(m+n+q-1)_j}\right)R_{m,n}^{q-2}(z), \qquad z\in \DD,
\end{gather*}
belongs to the class $\Psi^+(\esf)$.

{\bf 4. Akta\c{s}, Ta\c{s}delen and Yavuz family.} The function (Akta{\c{s}} et al.~\cite{aktas})
\begin{gather*}
f_t(z) := \frac{1}{R} \left(\frac2{1-t+R}\right)^{q-2} e^{(2tz)/(1+t+R)}= \sum_{m,n\geq0}(q-1)_n\frac{t^{m+n}}{m!n!} R_{m+n,n}^{q-2}(z), \qquad z\in \DD,
\end{gather*}
where $R:=\big(1-2\big(2|z|^2-1\big)t+t^2\big)^{1/2}$, is a member of $\Psi^+(\esf)$, for each $t\in(0,1)$.

{\bf 5. Horn family.} Let $r$, $R$ be positive integers such that $4r=(R-1)^2$. Horn's function $H_4$ is def\/ined on p.~57 of Srivastava and Manocha~\cite{srivastava} by
\begin{gather*}
H_4(a,b;c,d;x,y) = \sum_{m,n=0}^\infty\frac{(a)_{2m+n}(b)_n}{(c)_m(d)_n} \frac{x^my^n}{m!n!},
\end{gather*}
where $|x|<r$ and $|y|<R$.
An application of Theorem~2.2 in Akta{\c{s}} et al.~\cite{aktas} shows that
\begin{gather*}
f_{t,s,b}(z) := \frac1{(1-s)^{q-1}}H_4\left(q-1,b;q-1,q-1;\frac{s(|z|^2-1)}{(1-s)^2},\frac{t\ovl{z}}{1-s}\right)\\
\hphantom{f_{t,s,b}(z)}{} = \sum_{m,n\geq0}(q+n-1)_m(b)_n\frac{t^ns^m}{m!n!} R_{m,m+n}^{q-2}(z), \qquad z\in \DD.
\end{gather*}
Hence it is a member of $\Psi^+(\esf)$, for each $b$, a positive integer, and $t$, $s$ positive numbers satisfying
\begin{gather*}\label{eq-cond-Horn-t-s}
|s|<1, \qquad \frac{|s|}{(1-s)^2}<r,\qquad \text{and}\qquad \frac{|t|}{1-s}<R.
\end{gather*}

{\bf 6. Lauricella family.} Let $r_1$, $r_2$ and $r_3$ be positive integers such that $r_1r_2=(1-r_2)(r_2-r_3)$. The Lauricella hypergeometric function of three variables~$F_{14}$ (Saran's notation $F_F$ is also used (Saran~\cite{saran})) is def\/ined by (see p.~67 of Srivastava and Manocha~\cite{srivastava})
\begin{gather*}
F_{14}(a_1,a_1,a_1,b_1,b_2,b_1;c_1,c_2,c_2;x_1,x_2,x_3) = \sum_{m,n,p=0}^\infty\frac{(a_1)_{m+n+p}(b_1)_{m+p}(b_2)_n}{(c_1)_m(c_2)_{n+p}} \frac{x_1^mx_2^nx_3^p}{m!n!p!},
\end{gather*}
where $|x_1|<r_1$, $|x_2|<r_2$ and $|x_3|<r_3$. For $t,s\in\mathbb{R}$ such that $|s|<r_1$ and $|t|<r_2$, where $r_1=r_2(1-r_2)$, def\/ine
\begin{gather*}
f_{t,s,b}(z) := F_{14}\big(1,1,1,q-1,b,q-1;q-1,1,1;s\big(|z|^2-1\big),tz,s|z|^2\big), \qquad z\in \DD.
\end{gather*} From Theorem~2.3 in Akta{\c{s}} et al.~\cite{aktas} we get
\begin{gather*}
f_{t,s,b}(z) = \sum_{m,n\geq0}(q-1)_n(b)_m\frac{t^ms^n}{m!n!} R_{m+n,n}^{q-2}(z), \qquad z\in \DD,
\end{gather*}
and hence, $f_{t,s,b}$ is a member of $\Psi^+(\esf)$, for each $b$, a positive integer, and $t$, $s$ positive numbers satisfying the relevant conditions above.

Some comments are in order. Lauricella functions are generalizations of the Gauss hyper\-geo\-metric functions to multiple variables and were introduced by Lauricella in 1893. Recursion formulas and integral representation for Lauricella functions, including $F_{14}$ ($F_F$), have been studied and can be found, for example, in Sahai and Verma \cite{sahai} and Saran \cite{saran1955, saran1957}. In 1873, Schwarz \cite{schwarz} found a list of 15 cases where hypergeometric functions can be expressed algebraically. More precisely, Schwarz gave a list of parameters determining the cases where the hypergeometric dif\/ferential equation has two independent solutions that are algebraic functions. Between 1989 and 2009 several researchers extended this list: to general one-variable hypergeometric functions $_{p+1}F_p$ (Beukers and Heckman \cite{beukers}), the Appell--Lauricella functions~$F_1$ and~$F_D$ (Beazley~Cohen and Wolfart~\cite{wolfart}), the Appell functions~$F_2$ and~$F_4$ (Kato \cite{kato1, kato2}), and the Horn function $G_3$ (Schipper~\cite{schipper}). In 2012, Bod~\cite{bod} extended Schwarz' list to the four classes of Appell--Lauricella functions and the 14 complete Horn functions, including~$H_4$.

\section{Proof of the results}\label{s-proofs}

In this section we f\/irst prove some technical lemmas. Then, we shall be able to give the proof of our main results and to present the counterexamples.

The f\/irst lemma contains recurrence formulas connecting disc polynomials of dif\/ferent indexes and degrees. They are obtained from equation~(5.5) in Aharmim et al.~\cite{aharmim} and the following properties of the disc polynomials
\begin{gather*}
\ovl{R_{m,n}^\a(z)}=R_{n,m}^\a(z), \qquad \ovl{{\cal D}_{z} R_{m,n}^\a(z)} = {\cal D}_{\overline{z}} R_{n,m}^\a(z),
 \qquad \a>-1, \quad m,n\geq0,\quad z\in \DD.
\end{gather*}
We observe that the normalization adopted in Aharmim et al.~\cite{aharmim} for the disc polynomials is dif\/ferent from the one we use here.

\begin{Lemma} \label{l-rel-alpha+1_alpha} Let $m$, $n$ be non negative integers and $\a>-1$ be a real number. Then, for any $z\in \DD $, we have
\begin{gather}\label{eq-rel-alpha+1_alpha}
(\a+1)R_{m,n+1}^\a(z) = (\a+1)\ovl{z}R_{m,n}^{\a+1}(z)-\big(1-|z|^2\big){\cal D}_{z} R_{m,n}^{\a+1}(z),
\end{gather}
and
\begin{gather}\label{eq-rel-alpha+1_alpha-conj}
(\a+1)R_{n+1,m}^\a(z) = (\a+1){z}R_{n,m}^{\a+1}(z)-\big(1-|z|^2\big){\cal D}_{\overline{z}} R_{n,m}^{\a+1}(z).
\end{gather}
\end{Lemma}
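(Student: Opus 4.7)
The plan is to derive \eqref{eq-rel-alpha+1_alpha} directly from equation (5.5) in Aharmim et al., and then to obtain \eqref{eq-rel-alpha+1_alpha-conj} from \eqref{eq-rel-alpha+1_alpha} by complex conjugation together with the stated symmetry properties of the disc polynomials.

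First I would recall (or rewrite) equation (5.5) of Aharmim et al.\ in their normalization, and then rescale it so that the disc polynomials are normalized by $R_{m,n}^\alpha(1)=1$, the convention adopted in this paper. Since both normalizations differ only by the multiplicative constants $h_{m,n}^\alpha$ given in \eqref{eq-const_hmn} (and analogous constants for index $\alpha+1$), this rescaling amounts to tracking how the ratios of these constants interact with the degrees $(m,n)$ and $(m,n+1)$ appearing in the identity; after the dust settles, the factor $(\alpha+1)$ that multiplies $R_{m,n+1}^\alpha(z)$ and $\bar z\,R_{m,n}^{\alpha+1}(z)$ emerges as precisely what is needed to cancel the normalization discrepancy and produce \eqref{eq-rel-alpha+1_alpha}.

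For \eqref{eq-rel-alpha+1_alpha-conj}, I would simply apply complex conjugation to both sides of \eqref{eq-rel-alpha+1_alpha}, using that $\alpha+1$ and $1-|z|^2$ are real, that $\overline{\bar z}=z$, that $\overline{R_{m,n}^\alpha(z)}=R_{n,m}^\alpha(z)$, and that $\overline{\mathcal{D}_z R_{m,n}^\alpha(z)}=\mathcal{D}_{\bar z}R_{n,m}^\alpha(z)$. Conjugating \eqref{eq-rel-alpha+1_alpha} then yields
\begin{gather*}
(\alpha+1)R_{n+1,m}^\alpha(z)=(\alpha+1)z\,R_{n,m}^{\alpha+1}(z)-\bigl(1-|z|^2\bigr)\mathcal{D}_{\bar z}R_{n,m}^{\alpha+1}(z),
\end{gather*}
which is exactly \eqref{eq-rel-alpha+1_alpha-conj}.

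The main obstacle is purely bookkeeping: correctly converting Aharmim et al.'s normalization of the disc polynomials to the one used here, so that equation (5.5) in that reference becomes \eqref{eq-rel-alpha+1_alpha} with the precise coefficients $(\alpha+1)$ in front of $R_{m,n+1}^\alpha$ and $\bar z\,R_{m,n}^{\alpha+1}$, and $(1-|z|^2)$ in front of $\mathcal{D}_z R_{m,n}^{\alpha+1}$. Once the first identity is in place, the second is a one-line consequence of the conjugation symmetries recalled just before the lemma.
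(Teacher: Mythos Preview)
Your approach is essentially identical to the paper's: the paper simply states that the two identities follow from equation~(5.5) in Aharmim et al.\ together with the conjugation symmetries $\overline{R_{m,n}^\alpha(z)}=R_{n,m}^\alpha(z)$ and $\overline{\mathcal{D}_z R_{m,n}^\alpha(z)}=\mathcal{D}_{\bar z}R_{n,m}^\alpha(z)$, after adjusting for a different normalization of the disc polynomials---exactly what you propose. One small caveat: the ratio between the two normalizations is not the constant $h_{m,n}^\alpha$ of~\eqref{eq-const_hmn} (those are the reciprocals of the $L^2$ norms), so when you carry out the bookkeeping be sure to identify the actual renormalization factor used by Aharmim et al.
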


Below we prove an important technical result, that connects the expansion of a conti\-nuously dif\/ferentiable function $f$ in terms of the disc polynomials $R_{m,n}^\a$ with the expansion of its derivatives in terms of the disc polynomials~$R_{m,n}^{\a+1}$.

Since $R_{m,n}^{q-2}$ belongs to $\Psi(\esf)$ when $q\geq2$ is an integer, this connection will be the main ingredient in order to obtain preservation of positive def\/initeness for the Descente operators, when walks through dimensions over complex spheres are provided.

\begin{Lemma} \label{l-rel-coef-f-Dzf}Let $f\colon\DD\to\mathbb{C}$ be continuously differentiable and let $\a>-1$ be a real number. Consider the expansion of $f$ in terms of the disc polynomials $R_{m,n}^\a$ and the expansions of $\Dz f$ and $\Dcz f$ in terms of the disc polynomials $R_{m,n}^{\a+1}$
\begin{gather*}
f(z) \sim \sum_{m,n=0}^\infty a_{m,n}^\a R_{m,n}^\a(z), \qquad z \in \DD,\\
{\cal D}_{z}f(z) \sim \sum_{m,n=0}^\infty b_{m,n}^{\a+1} R_{m,n}^{\a+1}(z) \qquad \text{and} \qquad
{\cal D}_{\overline{z}}f(z) \sim \sum_{m,n=0}^\infty \tilde b_{m,n}^{\a+1} R_{m,n}^{\a+1}(z), \qquad z \in \DD.
\end{gather*}
Then,
\begin{gather*}
b_{m,n}^{\a+1} = \frac{(m+1)(n+\a+1)}{(\a+1)} a_{m+1,n}^\a, \qquad m,n\geq0,
\end{gather*}
and
\begin{gather*}
\tilde b_{m,n}^{\a+1} = \frac{(n+1)(m+\a+1)}{(\a+1)} a_{m,n+1}^\a, \qquad m,n\geq0.
\end{gather*}
\end{Lemma}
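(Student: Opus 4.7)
The plan is to derive a closed form for $b^{\a+1}_{m,n}$ (and then $\tilde b^{\a+1}_{m,n}$) by starting from the Fourier-type integral representation \eqref{eq-viado} of these coefficients, applying a Wirtinger integration by parts to move $\Dz$ off $f$, and then using Lemma~\ref{l-rel-alpha+1_alpha} to collapse the resulting kernel back into a single disc polynomial of index $\a$.

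Concretely, I would write
$$b^{\a+1}_{m,n}=h^{\a+1}_{m,n}\cdot\frac{\a+2}{\pi}\int_{\DD}\Dz f(z)\,\ovl{R^{\a+1}_{m,n}(z)}\,(1-|z|^2)^{\a+1}\,dx\,dy.$$
Since $\a+1>0$, the weight $(1-|z|^2)^{\a+1}$ vanishes on $\partial\DD$, so Stokes' theorem applied to the $(1,0)$-form $fg\,d\ovl{z}$ yields $\int_{\DD}\Dz(fg)\,dx\,dy=0$ whenever $g$ carries this factor. Taking $g(z)=\ovl{R^{\a+1}_{m,n}(z)}(1-|z|^2)^{\a+1}=R^{\a+1}_{n,m}(z)(1-|z|^2)^{\a+1}$ and using the Leibniz rule together with $\Dz(1-|z|^2)^{\a+1}=-(\a+1)\ovl{z}(1-|z|^2)^\a$ gives
$$\Dz\!\left[R^{\a+1}_{n,m}(z)(1-|z|^2)^{\a+1}\right]=(1-|z|^2)^\a\!\left[(1-|z|^2)\Dz R^{\a+1}_{n,m}(z)-(\a+1)\ovl{z}R^{\a+1}_{n,m}(z)\right].$$
By equation \eqref{eq-rel-alpha+1_alpha} of Lemma~\ref{l-rel-alpha+1_alpha} with the indices $m,n$ swapped, the bracket equals $-(\a+1)R^\a_{n,m+1}(z)=-(\a+1)\ovl{R^\a_{m+1,n}(z)}$. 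Substituting back, reconverting Lebesgue measure to $d\nu_\a$, and identifying the remaining integral with $a^\a_{m+1,n}/h^\a_{m+1,n}$ via \eqref{eq-viado}, one obtains
$$b^{\a+1}_{m,n}=(\a+2)\,\frac{h^{\a+1}_{m,n}}{h^\a_{m+1,n}}\,a^\a_{m+1,n}.$$
A short binomial simplification of \eqref{eq-const_hmn} yields $h^{\a+1}_{m,n}/h^\a_{m+1,n}=(m+1)(n+\a+1)/((\a+1)(\a+2))$, which produces the stated formula. The expression for $\tilde b^{\a+1}_{m,n}$ follows by the parallel argument, replacing $\Dz$ by $\Dcz$ and using \eqref{eq-rel-alpha+1_alpha-conj} instead of \eqref{eq-rel-alpha+1_alpha}; one could alternatively obtain it by applying the already proved identity to $\ovl{f(\ovl{z})}$ and invoking the swap $(m,n)\leftrightarrow(n,m)$ from $\ovl{R^\a_{m,n}}=R^\a_{n,m}$.

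The main technical point is the bookkeeping: one needs that the boundary term in Stokes' formula genuinely vanishes (this is exactly where the hypothesis $\a>-1$ is used), that the Wirtinger Leibniz rule is applied with correct signs, and that the three normalisations ($d\nu_\a$, $d\nu_{\a+1}$, and $dx\,dy$) are reconciled. There is no deeper obstacle: Lemma~\ref{l-rel-alpha+1_alpha} is precisely tuned to absorb the combination $(1-|z|^2)\Dz R^{\a+1}_{n,m}-(\a+1)\ovl{z}R^{\a+1}_{n,m}$ into a single factor $R^\a_{n,m+1}$, which is what makes the whole computation succeed.
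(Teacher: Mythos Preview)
Your proposal is correct and follows essentially the same route as the paper's proof: both start from the integral formula \eqref{eq-viado} for $b_{m,n}^{\a+1}$, integrate by parts using Green/Stokes to transfer $\Dz$ onto $R_{n,m}^{\a+1}(z)(1-|z|^2)^{\a+1}$ (with the boundary term vanishing because the weight does), then invoke Lemma~\ref{l-rel-alpha+1_alpha} to collapse the resulting expression to $(\a+1)R_{n,m+1}^\a(z)$ and finish with the ratio $h_{m,n}^{\a+1}/h_{m+1,n}^\a$. The only differences are cosmetic~--- the paper names Green's theorem explicitly and writes out the boundary integral, whereas you phrase it as Stokes' theorem applied to a $(1,0)$-form~--- but the computation, the key lemma, and the final simplification are identical.
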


It is worth noting that this result is not surprising if we consider the identities obtained in Koornwinder~\cite{koor-III}: for $\a>-1$,
\begin{gather}\label{eq-derv-em-rel-z}
\Dz R_{m,n}^\a = c_\a(m,n)R_{m-1,n}^{\a+1} \qquad \text{and} \qquad \Dcz R_{m,n}^\a = c_\a(n,m)R_{m,n-1}^{\a+1},
\end{gather}
where $c_\a(m,n):= (m(n+\a+1))/(\a+1)$. These are, in the complex case, the analogue of the identities for the derivative of the Gegenbauer polynomials (see Szeg\H{o} \cite[equation~(4.7.14)]{szego}).

Actually, Lemma \ref{l-rel-coef-f-Dzf} shows that the coef\/f\/icients in the expansions are linked as if the series could be derived term by term.

\begin{proof}[Proof of Lemma \ref{l-rel-coef-f-Dzf}] The coef\/f\/icients $b_{m,n}^{\a+1}$ are given by the formula
\begin{gather*}
b_{m,n}^{\a+1} = h_{m,n}^{\a+1} \int_{\DD }{\cal D}_{z}f(z)\ovl{R_{m,n}^{\a+1}(z)}{\rm d}\nu_{\a+1}(z),
\end{gather*}
where the constants $h_{m,n}^{\a+1}$ are given in \eqref{eq-const_hmn}. Def\/ine
\begin{gather*}
I := \int_{\DD } {\cal D}_{z}f(z){R_{n,m}^{\a+1}(z)}{\rm d}\nu_{\a+1}(z)
= \frac{\a+2}{\pi} \int_{\DD }{\cal D}_{z}f(z){R_{n,m}^{\a+1}(z)}\big(1-x^2-y^2\big)^{\a+1}{\rm d}x{\rm d}y.
\end{gather*}
Integration by parts and direct inspection shows that
\begin{gather*}
I = \frac{\a+2}{\pi} \bigg\{\int_{\DD }{\cal D}_{z}\big[f(z){R_{n,m}^{\a+1}(z)}\big(1-|z|^2\big)^{\a+1}\big]{\rm d}x{\rm d}y \\
\hphantom{I = \frac{\a+2}{\pi} \bigg\{}{} -\int_{\DD } f(z) {\cal D}_{z}\big[{R_{n,m}^{\a+1}(z)}\big(1-|z|^2\big)^{\a+1}\big]{\rm d}x{\rm d}y \bigg\}.
\end{gather*}
Using Green's theorem and \eqref{eq-deriv_x_y} we have
\begin{gather*}\label{eq-relation_int_B-int_front_conj_z}
\int_{\ccir} g(z) {\rm d}\ovl{z} = - 2i\int_{\DD } {\cal D}_{z}(g)(z) {\rm d}x{\rm d}y,
\end{gather*}
for any continuously dif\/ferentiable function $g$. Thus,
\begin{gather*}
 I = \frac{\a+2}{\pi} \left\{ \frac{i}2\int_{\ccir} \! f(z){R_{n,m}^{\a+1}(z)}\big(1-|z|^2\big)^{\a+1}{\rm d}\ovl{z} - \!\int_{\DD } f(z) {\cal D}_{z}\left[{R_{n,m}^{\a+1}(z)}\big(1-|z|^2\big)^{\a+1}\right]{\rm d}x{\rm d}y \right\}\\
\hphantom{I}{} = - \frac{\a+2}{\pi} \int_{\DD } f(z) {\cal D}_{z}\left[{R_{n,m}^{\a+1}(z)}\big(1-|z|^2\big)^{\a+1}\right]{\rm d}x{\rm d}y.
\end{gather*}
Now, by noting that
\begin{gather*} {\cal D}_{z}\big[{R_{n,m}^{\a+1}(z)}\big(1-|z|^2\big)^{\a+1}\big] =
{\cal D}_{z}{R_{n,m}^{\a+1}(z)}\big(1-|z|^2\big)^{\a+1} - (\a+1)\big(1-|z|^2\big)^{\a}\ovl{z}{R_{n,m}^{\a+1}(z)},
\end{gather*}
we get
\begin{gather*}
I =\frac{\a+2}{\pi} \int_{\DD } f(z)\big(1-|z|^2\big)^{\a}\left[(\a+1)\ovl{z}R_{n,m}^{\a+1}(z) - \big(1-|z|^2\big){\cal D}_{z}{R_{n,m}^{\a+1}(z)}\right] {\rm d}x{\rm d}y.
\end{gather*}
Hence, using Lemma \ref{l-rel-alpha+1_alpha}, we have
\begin{gather*}
I =\frac{\a+2}{\pi} \int_{\DD } f(z)\big(1-|z|^2\big)^{\a}(\a+1)R_{n,m+1}^{\a}(z) {\rm d}x{\rm d}y
=(\a+2) \int_{\DD } f(z)\ovl{R_{m+1,n}^{\a}(z)} {\rm d}\nu_\a(z).
\end{gather*}
Thus,
\begin{gather*}
b_{m,n}^{\a+1} = h_{m,n}^{\a+1}I = (\a+2)h_{m,n}^{\a+1}\frac{1}{h_{m+1,n}^\a} a_{m+1,n}^\a.
\end{gather*}
Replacing the values of $h_{m,n}^{\a+1}$ and $h_{m+1,n}^\a$ given in equation~\eqref{eq-const_hmn}, we obtain
\begin{gather*}
b_{m,n}^{\a+1} = (\a+2)\frac{(m+1)(\a+n+1)}{(\a+2)(\a+1)} a_{m+1,n}^\a = \frac{(m+1)(\a+n+1)}{(\a+1)} a_{m+1,n}^\a.
\end{gather*}
The proof for the case of the operator $\Dcz $ is analogous observing that
\begin{gather*}\label{eq-relation_int_B-int_front_z}
\int_{\ccir} g(z) {\rm d}z = 2i\int_{\DD } {\cal D}_{\overline{z}}(g)(z) {\rm d}x{\rm d}y.\tag*{\qed}
\end{gather*}
\renewcommand{\qed}{}
\end{proof}

The last technical lemma gives a condition for the expansion of a continuous function in terms of the disc polynomials to be uniformly convergent.

\begin{Lemma} \label{l-conv-ser-coeff} Let $g\colon \DD \to\mathbb{C}$ be a continuous function and consider its expansion
\begin{gather}\label{eq-serie_disc_pol}
g(z) \sim \sum_{m,n\geq0} d_{m,n}^{q-2}R_{m,n}^{q-2}(z),\qquad z\in \DD,
\end{gather}
where
$d_{m,n}^{q-2}$ are given as in \eqref{eq-coef-exp-disc-f-cont}. If $d_{m,n}^{q-2}\geq0$ for all $m,n\geq0$, then
$\sum\limits_{m,n\geq0} d_{m,n}^{q-2}<\infty$. In particular, the series in~\eqref{eq-serie_disc_pol} converges uniformly in $\DD $.
\end{Lemma}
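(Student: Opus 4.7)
The plan is to identify the series $\sum_{m,n} d_{m,n}^{q-2} S_{m,n}^{q}(r)$ with the value at $re_{1}$ of the harmonic extension of $g(\langle\cdot,e_{1}\rangle)$ to the closed ball, and then let $r\to 1^{-}$.

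First I would set $h(\xi):=g(\langle \xi,e_{1}\rangle)$ for $\xi\in\esf$, which is continuous, and solve the associated Dirichlet problem for $\Delta_{2q}$: there is a continuous $u\colon B_{2q}\to\mathbb{C}$ with $u|_{\esf}=h$ and $u(z)=\int_{\esf}{\cal P}_{q}(z,\rho)h(\rho)\,d\omega_{2q}(\rho)$ by \eqref{eq-dirich-probl}. Specializing to $z=re_{1}$ with $r\in[0,1)$ and inserting the Folland expansion \eqref{eq-pois-sze-folland} of ${\cal P}_{q}(re_{1},\rho)$ (which converges absolutely and uniformly in $\rho\in\esf$ for each fixed $r<1$), one can interchange sum and integral to obtain
\begin{gather*}
u(re_{1})=\sum_{m,n\geq0}S_{m,n}^{q}(r)\,\frac{h_{m,n}^{q-2}}{\sigma_{2q}}\int_{\esf}g(\langle\rho,e_{1}\rangle)R_{m,n}^{q-2}(\langle e_{1},\rho\rangle)\,d\omega_{2q}(\rho)=\sum_{m,n\geq0}S_{m,n}^{q}(r)\,d_{m,n}^{q-2},
\end{gather*}
where the last equality uses the integral formula \eqref{eq-coef-exp-disc-f-cont} for $d_{m,n}^{q-2}$.

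Next I would take the limit $r\to1^{-}$. Since $u$ is continuous on $B_{2q}$ and $u|_{\esf}=h$, we have $u(re_{1})\to h(e_{1})=g(1)$. Because $d_{m,n}^{q-2}\ge 0$ and $S_{m,n}^{q}(r)\ge 0$ with $\lim_{r\to1^{-}}S_{m,n}^{q}(r)=1$, Fatou's lemma applied to counting measure on $\mathbb{Z}_{+}^{2}$ yields
\begin{gather*}
\sum_{m,n\geq0}d_{m,n}^{q-2}=\sum_{m,n\geq0}\liminf_{r\to1^{-}}S_{m,n}^{q}(r)\,d_{m,n}^{q-2}\leq\liminf_{r\to1^{-}}\sum_{m,n\geq0}S_{m,n}^{q}(r)\,d_{m,n}^{q-2}=g(1)<\infty,
\end{gather*}
which is the first claim.

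For the uniform convergence, I would invoke the standard bound $|R_{m,n}^{q-2}(z)|\leq R_{m,n}^{q-2}(1)=1$ for all $z\in\DD$ (which follows from the normalization $R_{k}^{(\alpha,\beta)}(1)=1$ in \eqref{eq_defZer} together with the fact that disc polynomials attain their maximum modulus on $\DD$ at $z=1$). The Weierstrass $M$-test applied with $M_{m,n}=d_{m,n}^{q-2}$ then delivers uniform convergence of the series \eqref{eq-serie_disc_pol} on $\DD$. The only genuine step to be careful about is the interchange of summation and integration producing the identity for $u(re_{1})$, but this is legitimate because the Folland expansion converges uniformly on $\esf$ for each fixed $r<1$; everything else is a direct combination of Fatou's lemma and the uniform boundedness of the disc polynomials.
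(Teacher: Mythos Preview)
Your proof is correct and follows essentially the same route as the paper: both identify $u(re_{1})=\sum_{m,n}S_{m,n}^{q}(r)\,d_{m,n}^{q-2}$ via the Poisson--Szeg\H{o} expansion, let $r\to1^{-}$ using continuity of $u$ up to the boundary, and finish with the Weierstrass $M$-test. The only cosmetic difference is that where you invoke Fatou's lemma, the paper bounds finite partial sums $\sum_{m\le k,\,n\le l}d_{m,n}^{q-2}S_{m,n}^{q}(r)$ by $u(re_{1})$ and passes to the limit in the finite sum to get $s_{k,l}\le g(1)$.
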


\begin{proof} The argument is similar to the one used in the proof of Theorem~4.1 in Menegatto and Peron \cite{P-valdir-pd-esfcompl}. Given $\xi\in\esf$, consider the continuous function $h(\rho):=g(\la\rho,\xi\ra)$, $\rho\in\esf$. By equation~\eqref{eq-dirich-probl}, the solution of the Dirichlet problem $\Delta_{2q}u=0$ in the interior of $B_{2q}$ with boundary condition $h$, evaluated on the segment $r\xi$, $r\in[0,1)$, is
 \begin{gather*}
u(r\xi) = \int_{\esf}{\cal	P}_{q}(r\xi,\rho)g(\la\rho,\xi\ra){\rm d}\omega_{2q}(\rho) = \sum_{m,n\geq0} S_{m,n}^{q}(r) d_{m,n}^{q-2},
 \end{gather*}
where the last equality is obtained from \eqref{eq-pois-sze-folland}, \eqref{eq-coef-exp-disc-f-cont}.

Since $u$ is continuous up to the boundary and coincides with $h$ on $\esf$, we obtain
\begin{gather*}
\lim_{r\to1^-}\sum_{m,n\geq0} d_{m,n}^{q-2}S_{m,n}^{q}(r) = \lim_{r\to1^-} u(r\xi) = u(\xi)= g(\la \xi,\xi\ra) = g(1).
\end{gather*}
Now, note that
 \begin{gather*}
 0 \leq \sum_{m=0}^k \sum_{n=0}^l d_{m,n}^{q-2}S_{m,n}^{q}(r)\leq \sum_{m,n\geq0} d_{m,n}^{q-2}S_{m,n}^{q}(r), \qquad 0 \leq r < 1.
 \end{gather*}
 Letting $r\to1^-$, we get
\begin{gather*} \label{eq-series-ineq}
0\leq s_{k,l}:= \sum_{m=0}^k \sum_{n=0}^l d_{m,n}^{q-2} \leq
\lim_{r\to1^-}\sum_{m,n\geq0} d_{m,n}^{q-2}S_{m,n}^{q}(r) = g(1), \qquad k,l\in\mathbb{Z}_+.
\end{gather*}
Hence, the sequence $\{s_{k,l}\}_{k,l\in\mathbb{Z}_+}$ is bounded and increasing. Thus, the series $\sum\limits_{m,n\geq0} d_{m,n}^{q-2}$ is convergent.
Using the fact that $|R_{m,n}^{q-2}(z)|\leq1$ for all $z\in \DD $ and using the Weierstrass M-Test, the proof is completed.
\end{proof}

At this point, we are able to prove our main results.

\begin{proof}[Proof of Theorem \ref{t-descente-esf}.] Let $f$ be a function in the class $\Psi(\esf)$. Then, by Theorem~\ref{t-pd-esf}(1),
\begin{gather*}
f(z) = \sum_{m,n\geq0}a_{m,n}^\a R_{m,n}^\a(z), \qquad z \in \DD,
\end{gather*}
where $\a=q-2$, $a_{m,n}^\a\geq0$, for all $m,n\geq0$, and $\sum\limits_{m,n\geq0}a_{m,n}^\a<\infty$. Consider the expansion in terms of disc polynomials of $\Dz f$:
\begin{gather*}
\Dz f(z) \sim \sum_{m,n\geq0}b_{m,n}^{\a+1}R_{m,n}^{\a+1}(z),\qquad z\in \DD.
\end{gather*}

By Lemma \ref{l-rel-coef-f-Dzf} and equation~\eqref{eq-derv-em-rel-z},
\begin{gather}\label{eq-coef-Dz}
b_{m,n}^{\a+1} = c_\a(m+1,n)a_{m+1,n}^\a, \qquad m,n\geq0.
\end{gather}

Roughly speaking, \eqref{eq-coef-Dz} means that the coef\/f\/icients $\big\{b_{m,n}^{\a+1}\big\}$ are obtained from the $\{a_{m,n}^\a\}$ by suppressing the~$a_{0,n}^\a$, translating in the f\/irst index and multiplying by the positive constants $\{c_\a(m+1,n)\}$.

Then, by equation \eqref{eq-derv-em-rel-z}, we have
\begin{gather*}
\sum_{m,n\geq0}a_{m,n}^\a\Dz R_{m,n}^{\a}(z) = \sum_{m\geq-1}\sum_{n\geq0}a_{m+1,n}^\a\Dz R_{m+1,n}^{\a}(z)
 = \sum_{m,n\geq0}b_{m,n}^{\a+1} R_{m,n}^{\a+1}(z).
\end{gather*}

Now, since $c_\a(m+1,n)$ are positive constants, we have that $b_{m,n}^{\a+1}\geq0$ for all $m,n\geq0$. By Lemma~\ref{l-conv-ser-coeff}, the series
\begin{gather*}
\sum_{m,n\geq0}a_{m,n}^\a\Dz R_{m,n}^{\a}(1) = \sum_{m,n\geq0}b_{m,n}^{\a+1}
\end{gather*}
is convergent and the series $\sum\limits_{m,n\geq0}b_{m,n}^{\a+1}R_{m,n}^{\a+1}(z)$ converges uniformly in $\DD $. It follows, by term by term dif\/ferentiation, that
\begin{gather*}
\Dz f(z) = \sum_{m,n\geq0}b_{m,n}^{\a+1}R_{m,n}^{\a+1}(z).
\end{gather*}

Hence, by Theorem \ref{t-pd-esf}(1), $\Dz f$ belongs to the class $\Psi(\Omega_{2q+2})$.

Similarly, we can conclude the same for the operator $\Dcz $.

For the item (ii), observe that, as a consequence of \eqref{eq-coef-Dz}, if the $(2q)$-complex Schoenberg coef\/f\/icients $a_{m,n}^\alpha$ of~$f$ satisfy \eqref{eq-condit-para-desc}, then the $(2q+2)$-complex Schoenberg coef\/f\/icients $b_{m,n}^{\alpha+1}$ of~$\Dz f$ (and similarly for~$\Dcz f$) satisfy~\eqref{eq-condit-para-spd}. Actually, the condition $m,n\geq1$ in the set considered in~\eqref{eq-condit-para-desc} guarantees that the intersections with the arithmetic progressions in $\mathbb Z$ do not depend on the coef\/f\/icients~$a_{m,0}^{\alpha}$ or~$a_{0,n}^{\alpha}$, which are suppressed by the Descente operators.

The results for $\D_xf$ follow immediately by \eqref{eq-deriv_x_y}.
\end{proof}

\begin{proof}[Proof of Theorem \ref{t-montee-esf}] Suppose that $f$ belongs to the class $\Psi(\Omega_{2q+2})$. By Theorem~\ref{t-pd-esf},
\begin{gather*}
f(z) = \sum_{m,n\geq0}a_{m,n}^{\a+1}R_{m,n}^{\a+1}(z), \qquad z \in \DD,
\end{gather*}
where $\alpha=q-2$ and $a_{m,n}^{\a+1}\geq0$ for all $m,n\geq0$ and $\sum\limits_{m,n\geq0}a_{m,n}^{\a+1}<\infty$. By equation~\eqref{eq-derv-em-rel-z}, we have
\begin{gather}\label{eq-int-disc-pol}
\I(R_{m-1,n}^{\a+1})(z) = \frac{1}{c_\a(m,n)} \big(R_{m,n}^\a(z) - R_{m,n}^\a(0) \big),
\end{gather}
where $R_{n,n}^\a(0) = (-1)^nn!\a!/(n+\a)!$ and $R_{m,n}^\a(0) =0$, $m\neq n$ (W\"unsche \cite[equation~(2.9)]{wunsche}). Thus consider
\begin{gather} \label{eq-serie-integral}
F(z):= \sum_{m,n\geq0}a_{m,n}^{\a+1}\I (R_{m,n}^{\a+1})(z) = \sum_{m\geq1}\sum_{n\geq0} \frac{a_{m-1,n}^{\a+1}}{c_\a(m,n)} \left(R_{m,n}^\a(z) - R_{m,n}^\a(0) \right), \qquad z\in \DD.\!\!\!
\end{gather}

Since $c_\a(m,n)\geq1$ for all $m\geq1$, $n\geq0$ and $|R_{m,n}^\a(0)|\leq1$, for all $m$, $n$, we have that the series
\begin{gather*}
\sum_{m\geq1}\sum_{n\geq0} \frac{a_{m-1,n}^{\a+1}}{c_\a(m,n)} \qquad \text{and} \qquad c: = \sum_{m\geq1}\sum_{n\geq0} \frac{a_{m-1,n}^{\a+1}}{c_\a(m,n)} R_{m,n}^\a(0)
\end{gather*}
are convergent. Furthermore, since
\begin{gather*}
\left|\frac{a_{m-1,n}^{\a+1}}{c_\a(m,n)}\left(R_{m,n}^\a(z) - R_{m,n}^\a(0)\right)\right|\leq 2\frac{a_{m-1,n}^{\a+1}}{c_\a(m,n)}, \qquad m,n\geq0, \quad z\in \DD,
\end{gather*}
the series in \eqref{eq-serie-integral} converges uniformly in~$\DD $. On the other hand, by applying the derivation operator~$\D_z$ term by term in~\eqref{eq-serie-integral}, one obtains the uniformly convergent series of~$f$. Then $F$ is a~$z$-primitive of~$f$. Since $F(0)$=0, we conclude that~\eqref{eq-serie-integral} converges to~$\I(f)(z)$.

We can now write
\begin{gather*}
\I (f)(z) = \sum_{m,n\geq0} b_{m,n}^\a R_{m,n}^\a(z),
\end{gather*}
where
\begin{gather}
b_{0,0}^\a := - c;\qquad b_{0,n}^\a := 0, \qquad n\geq1;\nonumber\\
b_{m,n}^\a := \frac{a_{m-1,n}^{\a+1}}{c_\a(m,n)}, \qquad m\geq1,\quad n\geq0; \label{eq-coef-If}
\end{gather}
and $\sum\limits_{m,n\geq0}b_{m,n}^\a<\infty$. Now we can write
\begin{gather*}
c+\I f (z) = \sum_{m,n\geq0} \widehat{b}_{m,n}^\a R_{m,n}^\a(z),
\end{gather*}
where
\begin{gather} \label{eq-coef-If2}
\widehat{b}_{0,0}^\a:=0; \qquad \widehat{b}_{0,n}^\a:=b_{0,n}^\a, \qquad n\geq1\qquad \text{and} \qquad
\widehat{b}_{m,n}^\a:=b_{m,n}^\a, \qquad m\geq1,\quad n\geq0
\end{gather}
are nonnegative constants and $\sum\limits_{m,n\geq0}\widehat{b}_{m,n}^\a<\infty$.

Equations \eqref{eq-coef-If} and \eqref{eq-coef-If2} mean that the coef\/f\/icients $\big\{\widehat b_{m,n}^{\a}\big\}$ are obtained from the $\{a_{m,n}^{\a+1}\}$, by translating in the f\/irst index, adding the new coef\/f\/icients $\widehat b_{0,n}^\a=0$, and dividing by the positive constants $\{c_\a(m,n)\}$.

Hence, applying Theorem \ref{t-pd-esf}(1) again, we have that $c+\I f$ belongs to the class $\Psi(\esf)$.

For the item (ii), it is enough to observe that the $(2q+2)$-complex Schoenberg coef\/f\/icients $a_{m,n}^{\alpha+1}$ of $f$ satisfy~\eqref{eq-condit-para-spd} by the assumption $f\in\Psi^+(\esfqq)$, then, as a consequence of (\ref{eq-coef-If}),~(\ref{eq-coef-If2}), also the $(2q)$-complex Schoenberg coef\/f\/icients $\widehat b_{m,n}^{\alpha}$ of $c+\I f$ satisfy \eqref{eq-condit-para-spd}, implying $c+\I f\in\Psi^+(\esf)$.

For the operator $\Ic $, one uses the relation
\begin{gather*} \label{eq-int-disc-pol-conj}
\Ic(R_{m,n-1}^{\a+1})(z) = \frac{1}{c_\a(n,m)} \big(R_{m,n}^\a(z) - R_{m,n}^\a(0) \big),
\end{gather*}
and follows the same arguments. In fact, the $(2q)$-complex Schoenberg coef\/f\/icients of $C+\Ic f$ are given by
\begin{gather*}
\check b_{0,0}^\a := C - \sum_{\mu\geq1}\sum_{\nu\geq0} \frac{a_{\mu,\nu-1}^{\a+1}}{c_\a(\nu,\mu)} R_{\mu,\nu}^\a(0); \qquad
\check b_{m,0}^\a := 0, \qquad m\geq1;\\
\check b_{m,n}^\a := \frac{a_{m,n-1}^{\a+1}}{c_\a(m,n)}, \qquad m\geq0, \quad n\geq1.\tag*{\qed}
\end{gather*}\renewcommand{\qed}{}
\end{proof}

\begin{proof}[Proof of Counterexample~\ref{cex}.] Let us denote by $a_{m,n}^{q-2}(g)$ the $(2q)$-complex Schoenberg coef\/f\/i\-cients of a positive def\/inite function $g$. Theorem \ref{t-pd-esf}(2) is required.

(i) For a function $f$ as in the statement, we have ${\cal D}_xf=\Dz f$ and
\begin{gather*}
\big\{m-n\colon a_{m,n}^{q-1}(\Dz f)>0\big\} = \big\{m-n\colon a_{m,n}^{q-2}(f)>0\big\} = \mathbb{Z}_+.
\end{gather*}
Hence the above set intercepts every arithmetic progression in $\mathbb Z$, that is $f\in\Psi^+(\esf)$ and $\Dz f, D_xf\in\Psi^+(\esfqq)$. However,
$\Dcz f\equiv 0$, so that $\Dcz f\not\in\Psi^+(\esfqq)$.

(ii) Analogous to (i).

(iii) For a function $f$ as in the statement, we have
\begin{gather*}\big\{m-n\colon a_{m,n}^{q-2}(f)>0, \, m,n\geq0\big\}=\left(\bigcup_{j=2}^55\mathbb Z_++j\right)\cup(-5\mathbb Z_+-4),\end{gather*} which intercepts every arithmetic progression in $\mathbb Z$ and then $f\in\Psi^+(\esf)$. However
\begin{gather*}
\big\{m-n\colon a_{m,n}^{q-1}(\mathcal D_z f)>0, \, m,n\geq0\big\} = \mathbb Z_+\setminus 5\mathbb{Z}
\end{gather*}
and
\begin{gather*}
\big\{m-n\colon a_{m,n}^{q-1}(\Dcz f)>0, \, m,n\geq0\big\} = -5\mathbb{Z}_+-3,
\end{gather*}
that is, $\Dz f, \Dcz f\not\in\Psi^+(\esfqq)$. To see that $\mathcal D_xf\not\in\Psi(\esfqq)$, note that $\{m-n\colon a_{m,n}^{q-1}(\mathcal D_x f)>0$, $m,n\geq0\}$ is the union of the previous two sets, so it does not intersect the progression $5\mathbb Z$.
\end{proof}

\subsection*{Acknowledgement}
The authors gratefully thank the anonymous referees for the constructive comments and recommendations which helped to greatly improve the paper.
Eugenio Massa was supported by grant $\#$2014/25398-0, S\~ao Paulo Research Foundation (FAPESP) and grant \mbox{$\#$308354/2014-1}, CNPq/Brazil.
Ana P.~Peron was supported by grants $\#$2016/03015-7 and $\#$2014/25796-5, S\~ao Paulo Research Foundation (FAPESP).
Emilio Porcu was supported by grant FONDECYT \#1170290 from the Chilean government.

\pdfbookmark[1]{References}{ref}
\LastPageEnding

\end{document}